\theoremstyle{plain}
\newtheorem{thm}{Theorem}[section]
\newcommand{\thistheoremname}{}
\newtheorem{genericthm}[thm]{\thistheoremname}
\newtheorem{lem}[thm]{Lemma}
\newtheorem{cor}[thm]{Corollary}
\theoremstyle{definition}
\theoremstyle{remark}
\newtheorem{rem}{Remark}[section]
\title{The Milnor-Palamodov Theorem for Functions on Isolated Hypersurface Singularities}
\author{Konstantinos Kourliouros\\
ICMC-USP\footnote{Avenida Trabalhador San Carlense 400, Centro, S\~ao Carlos, SP, Brazil.
k.kourliouros@gmail.com}
}
\begin{document}

\maketitle

\begin{abstract}
In this note we give a simple proof of the following relative analog of the well known Milnor-Palamodov theorem: the Bruce-Roberts number of a function relative to an isolated hypersurface singularity is equal to its topological Milnor number (the rank of a certain relative (co)homology group) if and only if the hypersurface singularity is quasihomogeneous. The proof relies on an interpretation of the Bruce-Roberts number in terms of differential forms and the L\^e-Greuel formula.
\medskip

\noindent \textbf{Keywords:}  Bruce-Roberts number,  Milnor number, relative differential forms, torsion differentials,  L\^e-Greuel formula

\medskip

\noindent \textbf{MSC2010:} 14B05, 14J17, 32S25, 58K65 
\end{abstract}


\section{Introduction}

Let $f:(\mathbb{C}^{n+1},0)\rightarrow (\mathbb{C},0)$ be an analytic function germ with an isolated critical point at the origin. The classical Milnor-Palamodov theorem \cite{Mil}, \cite{Pal} is a statement on the equality between an analytic invariant of the singularity, namely, the dimension of the local algebra $\mathcal{O}_{n+1}/J_f$ (where $J_f$ is the ideal generated by the partial derivatives of $f$), with a topological invariant, namely, the number of $n$-dimensional spheres in the bouquet decomposition of the Milnor fiber $X_t$ of $f$, or what is equivalent, the rank of the middle (co)homology group $H_n(X_t;\mathbb{Z})$. This common number is now well known as the Milnor number $\mu(f)$ of the singularity $f$ and is one of the most important analytic-topological invariants of the singularity.

The Milnor-Palamodov theeorem has been generalised in a number of cases and in particular for the isolated complete intersection singularities (the famous L\^e-Greuel formula \cite{Gr}, \cite{Le1}), as well as for the so called isolated boundary singularities of V. I. Arnol'd \cite{A1} (see \cite{Kou} for a proof), consisting of pairs $(f,V)$ where $f$ is an isolated singularity and $V$ is a smooth hypersurface such that the restriction $f_V$ of $f$ on $V$ also has an isolated singularity. 

Here we will consider a case interpolating between the previous two, i.e. pairs $(f,V)$ where again $f$ is an isolated singularity, and $V$ is a hypersurface which also has an isolated singularity at the origin, and is such that the restriction $f_V$ of $f$ on $V$ defines an isolated complete intersection at the origin. 

The analytic invariant generalising the dimension of the local algebra of the singularity in the case of pairs $(f,V)$ is the so called Bruce-Roberts number $\mu_{BR}(f,V)$ (introduced by J. W. Bruce and R. M. Roberts in \cite{BR} for arbitrary analytic sets $V$, but already studied earlier by O. V. Lyasko \cite{Ly} for the hypersurface case), which is the dimension of the local algebra $\mathcal{O}_{n+1}/J_f(V)$, where $J_f(V)$ is the ideal generated by the derivatives of $f$ along ``logarithmic'' vector fields, i.e. tangent to the smooth part of $V$. The corresponding topological invariant is the relative Milnor number $\mu(f,V)$, defined as the rank of the middle relative (co)homology group $H_n(X_t,X_t\cap V_s;\mathbb{Z})$ (where $V_s$ is the Milnor fiber of a smoothing of $V$), which is in turn equal to the sum of the ranks of $H_n(X_t;\mathbb{Z})$ and of  $H_{n-1}(X_t\cap V_s;\mathbb{Z})$, the latter being also equal to the number of $(n-1)$-dimensional spheres in the bouquet decomposition of the isolated complete intersection $f_V$ (this is a well known theorem due to H. Hamm \cite{Ha}).  

The main result of the paper (Theorem \ref{MP}) is the following relative analog of the Milnor-Palamodov theorem: the Bruce-Roberts number $\mu_{BR}(f,V)$ of the pair $(f,V)$ is equal to its relative topological Milnor number $\mu(f,V)$  if and only if the hypersurface singularity $V$ is quasihomogeneous. In particular the following formula holds:
\begin{equation}
\label{for-0}
\mu_{BR}(f,V)=\mu(f,V)+q(V),
\end{equation}  
where $q(V)$ is the degree of non-quasihomogeneity of the hypersurface $V$ (i.e. the difference of the Milnor and Tjurina numbers of $V$). 

The ``if" part of this theorem has been obtained rather recently in \cite{Ba}, where the authors give a proof using generators of the ideal $J_f(V)$ and certain  properties of the logarithmic characteristic variety, introduced by K. Saito in \cite{Sa1} and also used extensively by Bruce and Roberts in \cite{BR}. For quasihomogeneous $V$, they obtain nice properties of this logarithmic variety (Cohen-Macaulay) and they also give a relation of the Bruce-Roberts number with the so called Euler obstruction \cite{Grj}, \cite{Se}.   

In contrast to this ``microlocal'' approach of Saito's theory, we give here a more ``global'', but still elementary proof of formula (\ref{for-0}), which relies on a simple interpretation of the Bruce-Roberts number in terms of relative differential forms, the consideration of the so called torsion differentials, and an application of the well known L\^e-Greuel formula, given in its original form by G. M. Greuel \cite{Gr}.

We remark that the proof presented here, despite the fact of being elementary, it has the drawback that it is non-canonical, in the sense that it does not reveal the natural canonical relation that exists between the local algebra $\mathcal{O}_{n+1}/J_f(V)$ of the singularity with the relative cohomology $H^n(X_t,X_t\cap V_s)$ of the pair of Milnor fibers. To obtain such a canonical relation one has to introduce certain relative Brieskorn modules associated to the pair $(f,V)$ along with a relative Gauss-Manin connection naturally defined on them (as in \cite{Kou} for the case of isolated boundary singularities). Then formula (\ref{for-0}) is just a manifestation of Malgrange's index formula \cite{Mal} for this relative Gauss-Manin connection which, in the present exposition, is hidden within the L\^e-Greuel formula. Such an approach, along with several more profound relations with the relative monodromy, the spectrum and the spectral pairs coming from the eventual mixed Hodge structure in the relative cohomology, will be presented in a subsequent paper.

\section{Bruce-Roberts and Topological Milnor numbers}

Let $f:(\mathbb{C}^{n+1},0)\rightarrow (\mathbb{C},0)$ be an analytic function germ with an isolated critical point at the origin and let $(V,0)\subset \mathbb{C}^{n+1}$  be a germ of a reduced analytic hypersurface, given as the zero locus $V=g^{-1}(0)$ of an analytic function germ $g:(\mathbb{C}^{n+1},0)\rightarrow (\mathbb{C},0)$ which also has an isolated singularity at the origin (we do not exclude the case where $V$ might be smooth). Denote by $f_V:(V,0)\rightarrow (\mathbb{C},0)$ the restriction of $f$ on $V$ and suppose that it is a submersion along the smooth points $V^*=V\setminus \{0\}$ of $V$. Then $f_V$ also has an isolated critical point at the origin (in the stratified sense) and in particular, the intersection $f^{-1}(0)\cap V=f_V^{-1}(0)$ defines an isolated complete intersection singularity of dimension $n-1$. 

Denote now by $\Theta(V)\subset \Theta$ the submodule of vector fields at the origin which are tangent to the smooth part $V^*$ of $V$ (also known as logarithmic vector fields due to K. Saito \cite{Sa1}):
\[X\in \Theta(V)\Longleftrightarrow X(g)\in <g>\Longleftrightarrow X|_{V^*}\in TV^*.\]
Denote also by
\[J_f(V):=\{L_Xf/X\in \Theta(V)\}\]
the ideal generated by the derivatives of $f$ along logarithmic vector fields. Then, the local algebra of the singularity $(f,V)$ is defined as the quotient 
\[\mathcal{Q}_f(V):=\frac{\mathcal{O}_{n+1}}{J_f(V)}.\]
The dimension of this algebra, which is obviously finite dimensional, is usually called (by several authors) the Bruce-Roberts number $\mu_{BR}(f,V)$ of the singularity $(f,V)$:
\[\mu_{BR}(f,V):=\dim_{\mathbb{C}}\mathcal{Q}_f(V).\]

Obviously the Bruce-Roberts number of the pair $(f,V)$ is always bigger (or equal) to the ordinary Milnor number of $f$. In particular, the inclusion $J_f(V)\subseteq J_f$ (where $J_f$ is the ideal of partial derivatives of $f$) induces a projection of local algebras $\mathcal{Q}_f(V)\rightarrow \mathcal{Q}_f$ (where $\mathcal{Q}_f=\mathcal{O}_{n+1}/J_f$ is the local algebra of $f$ as in the introduction), whose kernel is the quotient of ideals:
\[\mathcal{Q}_{f_V}:=\frac{J_f}{J_f(V)},\]
and whose dimension we denote by 
\[\mu_{BR}(f_V):=\dim_{\mathbb{C}}\mathcal{Q}_{f_V}.\]
Thus we obtain a short exact sequence of local algebras:
\begin{equation}
\label{ses-la-1}
0\rightarrow \mathcal{Q}_{f_V}\rightarrow \mathcal{Q}_f(V)\rightarrow \mathcal{Q}_f\rightarrow 0,
\end{equation} 
which implies the following relation:
\begin{equation}
\label{fun-rel-2}
\mu_{BR}(f,V)=\mu(f)+\mu_{BR}(f_V).
\end{equation}

Concerning the topology of pairs $(f,V)$, denote by $X_t$ the Milnor fiber of $f$ and by $X_t\cap V_s$ the Milnor fiber of the isolated complete intersection singularity $(f,g)$. By Milnor's theorem \cite{Mil} the fiber $X_t$ has the homotopy type of a bouquet of $n$-dimensional spheres, whose number is equal to the Milnor number $\mu(f)$ of $f$. By Hamm's theorem \cite{Ha}, generalising Milnor's fibration theorem for the isolated complete intersection case, the fiber $X_t\cap V_s$ also has the homotopy type of a bouquet of $(n-1)$-dimensional spheres, whose number is equal to the Milnor number of the isolated complete intersection $(f,g)$ or, what is equivalent, of the function $f_V$. We denote this number by $\mu(f_V)$. It follows from this that the natural long exact sequence in homology obtained by the embedding $X_t\cap V_s\subset X_t$, reduces to the short exact sequence (it can be considered with integer coefficients):
\begin{equation}
\label{ses-coh-1}
0\rightarrow H_{n}(X_t)\rightarrow H_n(X_t,X_t\cap V_s)\rightarrow H_{n-1}(X_t\cap V_s)\rightarrow 0,
\end{equation}    
From this we also obtain the following fundamental relation of (topological) Milnor numbers:
\begin{equation}
\label{fun-rel-1}
\mu(f,V)=\mu(f)+\mu(f_V),
\end{equation}
where we denote by $\mu(f,V)$ the rank of the middle relative homology group:
\[\mu(f,V):=\mbox{rank}_{\mathbb{Z}}H_n(X_t,X_t\cap V_s).\]

The relation of the Bruce-Roberts numbers in (\ref{fun-rel-2}) with the topological Milnor numbers in (\ref{fun-rel-1}) is explicated by the following relative analog of Milnor-Palamodov theorem: 

\begin{thm}
\label{MP}
Let $(f,V)$ be an isolated singularity. Then the following identity holds:
\begin{equation}
\label{MP-id}
\mu_{BR}(f,V)=\mu(f,V)+q(V)=\mu(f)+\mu(f_V)+\mu(V)-\tau(V),
\end{equation} 
where $\mu(V)$ is the Milnor number of $V$, $\tau(V)$ is its Tjurina number, and $q(V)=\mu(V)-\tau(V)$ is the degree of non-quasihomogeneity of $V$.
\end{thm}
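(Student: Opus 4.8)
The plan is to reduce the whole statement to a single claim about the restriction. Combining the short exact sequences (\ref{ses-la-1}) and (\ref{ses-coh-1}), i.e. the relations (\ref{fun-rel-2}) and (\ref{fun-rel-1}), the asserted identity (\ref{MP-id}) becomes equivalent to
\begin{equation*}
\mu_{BR}(f_V)=\dim_{\mathbb{C}}\frac{J_f}{J_f(V)}=\mu(f_V)+\mu(V)-\tau(V).
\end{equation*}
From now on I would work only with $\mathcal{Q}_{f_V}=J_f/J_f(V)$ and discard $f$'s own Milnor number.

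The first main step is to translate $J_f(V)$ into differential forms. Writing $\Omega^p$ for the germs of holomorphic $p$-forms and $\mathrm{vol}=dx_0\wedge\dots\wedge dx_n$, I would use the $\mathcal{O}_{n+1}$-linear isomorphism $\Theta\xrightarrow{\sim}\Omega^n$, $X\mapsto\iota_X\mathrm{vol}$, together with the elementary identities $L_Xf\cdot\mathrm{vol}=df\wedge\iota_X\mathrm{vol}$ and $L_Xg\cdot\mathrm{vol}=dg\wedge\iota_X\mathrm{vol}$. Since the latter shows $X\in\Theta(V)$ iff $\iota_X\mathrm{vol}$ lies in $\Lambda:=\{\omega\in\Omega^n : dg\wedge\omega\in g\,\Omega^{n+1}\}$, one gets $J_f(V)\cdot\mathrm{vol}=df\wedge\Lambda$ while $J_f\cdot\mathrm{vol}=df\wedge\Omega^n$. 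As $f$ has an isolated singularity its partials form a regular sequence, so the Koszul (de Rham--Saito) lemma gives $\ker(df\wedge-)=df\wedge\Omega^{n-1}$; dividing out this common kernel yields the clean formula
\begin{equation*}
\mu_{BR}(f_V)=\dim_{\mathbb{C}}\frac{\Omega^n}{\Lambda+df\wedge\Omega^{n-1}}.
\end{equation*}

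The second step evaluates this quotient inside the local ring $\mathcal{O}_V=\mathcal{O}_{n+1}/(g)$. Setting $g_i:=\partial g/\partial x_i$ and $f_i:=\partial f/\partial x_i$, the map $\omega\mapsto (dg\wedge\omega)/\mathrm{vol}\bmod g$ sends $\Omega^n$ onto the image $\overline{J_g}$ of the Jacobian ideal of $g$, has kernel exactly $\Lambda$, and sends $df\wedge\Omega^{n-1}$ onto the image $\overline{M}$ of the ideal $M$ generated by the $2\times2$ minors $g_if_j-g_jf_i$ of the Jacobian of $(f,g)$ (these minors are precisely the functions $L_{\xi_{ij}}f$ attached to the Hamiltonian logarithmic fields $\xi_{ij}=g_i\partial_{x_j}-g_j\partial_{x_i}$, which satisfy $\xi_{ij}(g)=0$). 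Hence $\mu_{BR}(f_V)=\dim_{\mathbb{C}}\overline{J_g}/\overline{M}$, a length supported on $V$; the appearance of the Jacobian ideal $\overline{J_g}\subset\mathcal{O}_V$ is exactly the torsion-differential phenomenon, as $\overline{J_g}$ is what controls the torsion of the Kähler differentials $\Omega^n_V$ of the hypersurface.

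Finally I would close with the L\^e--Greuel formula. In its minors form it gives $\dim_{\mathbb{C}}\mathcal{O}_V/\overline{M}=\dim_{\mathbb{C}}\mathcal{O}_{n+1}/((g)+M)=\mu(V)+\mu(f_V)$, while by definition $\dim_{\mathbb{C}}\mathcal{O}_V/\overline{J_g}=\tau(V)$; since $M\subseteq J_g$, the module $\overline{J_g}/\overline{M}$ is the kernel of the surjection $\mathcal{O}_V/\overline{M}\twoheadrightarrow\mathcal{O}_V/\overline{J_g}$, so its dimension is $(\mu(V)+\mu(f_V))-\tau(V)=\mu(f_V)+q(V)$. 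Combined with the reduction of the first paragraph, this is exactly (\ref{MP-id}). I expect the genuine obstacle to be the first main step: proving $J_f(V)\cdot\mathrm{vol}=df\wedge\Lambda$ and, above all, correctly matching $\Omega^n/(\Lambda+df\wedge\Omega^{n-1})$ with $\overline{J_g}/\overline{M}$ on $V$, since this is where the non-quasihomogeneity $q(V)=\mu(V)-\tau(V)$ is secretly produced and where the regular-sequence (Koszul exactness) hypotheses on both $f$ and $g$ must be used with care.
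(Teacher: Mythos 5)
Your proof is correct, and although it shares its skeleton with the paper's argument --- reduce via (\ref{fun-rel-2}) and (\ref{fun-rel-1}) to showing $\mu_{BR}(f_V)=\mu(f_V)+\mu(V)-\tau(V)$, translate $J_f(V)$ into $df\wedge\Omega^n(V^*)$ by contraction with a volume form (your $\Lambda$ is exactly the paper's $\Omega^n(V^*)$, cf.\ Lemma \ref{lem-1}), and finish with L\^e--Greuel --- the middle of your argument takes a genuinely different route. The paper stays inside the world of differential-form complexes: it introduces the K\"ahler complex $\Omega^{\bullet}_V$ and the torsion subcomplex $T^{\bullet}_V$, invokes Greuel's nontrivial computation $T^{n-1}_V=0$, $\dim_{\mathbb{C}}T^n_V=\tau(V)$ (Lemma \ref{thm-coh-V}) to prove $\mu_G(f_V)=\mu_{BR}(f_V)+\tau(V)$ (Lemma \ref{lem-3}), and then applies Greuel's differential-form version of L\^e--Greuel, $\mu_G(f_V)=\mu(f_V)+\mu(V)$ (Lemma \ref{gr-le}). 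You instead push everything into $\mathcal{O}_V$ via $\omega\mapsto (dg\wedge\omega)/\mathrm{vol}\bmod g$, obtaining the concrete ideal-theoretic identification
\[
\mu_{BR}(f_V)=\dim_{\mathbb{C}}\overline{J_g}/\overline{M},
\]
after which $\tau(V)$ enters through its very definition $\tau(V)=\dim_{\mathbb{C}}\mathcal{O}_V/\overline{J_g}$ and the minors form of L\^e--Greuel (the version of \cite{Le1}, \cite{Loo}, rather than the differential-form version of \cite{Gr} quoted in the paper) closes the count. Your individual steps all check out: de Rham division $\ker(df\wedge-)=df\wedge\Omega^{n-1}$ is legitimate since $f$ has an isolated singularity; your map has image $\overline{J_g}$, kernel $\Lambda$, and sends $df\wedge\Omega^{n-1}$ onto $\overline{M}$ by the identity $dg\wedge df\wedge\iota_{\partial_j}\iota_{\partial_i}\mathrm{vol}=\pm(g_if_j-g_jf_i)\mathrm{vol}$; and $M\subseteq J_g$ makes the final colength subtraction valid. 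What your route buys: you never need Greuel's torsion lemma at all --- indeed your map kills $\Omega^n(V)=dg\wedge\Omega^{n-1}+g\Omega^n$, hence identifies the torsion-free quotient $\widetilde{\Omega}^n_V$ with $\overline{J_g}$ and exhibits $T^n_V$ as the kernel of $\Omega^n_V\twoheadrightarrow\overline{J_g}$, so the ``torsion phenomenon'' is replaced by an elementary Jacobian-ideal computation. What the paper's route buys: the Ferrari/torsion formalism it sets up is exactly the structure that feeds into the relative Brieskorn modules and Gauss--Manin connection announced in the introduction, which your ideal-theoretic shortcut does not directly provide.
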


\begin{rem}
\label{rem-3}
Formula (\ref{MP-id}) has already been obtained for the planar case $n=1$, in a different form, by C. T. C. Wall in \cite{W}.
\end{rem}

The fact that the number $q(V)$ can be be interpreted as the degree of non-quasihomogeneity of the singularity $V$ is a well known theorem of K. Saito \cite{Sa}. From this it follows:

\begin{cor}
\label{cor-1}
Let $(f,V)$ be an isolated singularity. Then
\[\mu_{BR}(f,V)=\mu(f,V)=\mu(f)+\mu(f_V)\]
if and only if the singularity $V$ is quasihomogeneous. In that case, the Bruce-Roberts number is a topological invariant of the singularity.
\end{cor}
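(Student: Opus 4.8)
The plan is to prove the identity (\ref{MP-id}) of Theorem \ref{MP}, from which Corollary \ref{cor-1} follows at once. First I would strip off the ordinary Milnor number: by the two fundamental relations (\ref{fun-rel-1}) and (\ref{fun-rel-2}), identity (\ref{MP-id}) is equivalent to the ``complete intersection part''
\[
\mu_{BR}(f_V)=\dim_{\mathbb{C}}J_f/J_f(V)=\mu(f_V)+q(V)=\mu(f_V)+\mu(V)-\tau(V),
\]
so that $\mu(f)$ drops out and everything reduces to computing $\dim_{\mathbb{C}}J_f/J_f(V)$.

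To interpret this dimension through differential forms I would contract with the volume form $dx=dx_1\wedge\cdots\wedge dx_{n+1}$, which identifies $\Theta$ with the germs of holomorphic $n$-forms via $X\mapsto \iota_X dx$, and which satisfies $X(f)\,dx=df\wedge\iota_X dx$ and $X(g)\,dx=dg\wedge\iota_X dx$. Hence the two $\mathcal{O}_{n+1}$-linear maps $X\mapsto X(f)$ and $\sigma:X\mapsto X(g)$ have images $J_f$ and $J_g$, and kernels the modules $\Theta_f^0$, $\Theta_g^0$ of vector fields annihilating $f$ and $g$ respectively. Since $f$ and $g$ have isolated singularities their partial derivatives form regular sequences, so $\Theta_f^0,\Theta_g^0$ are generated by the Hamiltonian (Koszul) fields $Y_{ij}=(\partial_if)\partial_j-(\partial_jf)\partial_i$ and $X_{ij}=(\partial_ig)\partial_j-(\partial_jg)\partial_i$. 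Because $\Theta_g^0\subseteq\Theta(V)$, and $\sigma$ surjects onto $J_g$ with $\sigma(\Theta(V))=(g)\cap J_g$ and $\sigma(\Theta_f^0)=\Delta$ (the ideal of $2\times2$ minors of the Jacobian of $(f,g)$, since $\sigma(Y_{ij})=(\partial_if)(\partial_jg)-(\partial_jf)(\partial_ig)$), the map $\sigma$ would induce
\[
\frac{J_f}{J_f(V)}\cong\frac{\Theta}{\Theta(V)+\Theta_f^0}\cong\frac{J_g}{(g)\cap J_g+\Delta}\cong\frac{(g,J_g)}{(g,\Delta)},
\]
the last isomorphism being elementary since $\Delta\subseteq J_g$.

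Finally I would bring in the L\^e--Greuel formula \cite{Gr}, applied to the hypersurface $V=g^{-1}(0)$ of dimension $n$ cut by $f$ — legitimate precisely because the submersion hypothesis guarantees that $(g,f)$ defines an isolated complete intersection. It gives $\mu(V)+\mu(f_V)=\dim_{\mathbb{C}}\mathcal{O}_{n+1}/(g,\Delta)$. Since $(g,\Delta)\subseteq(g,J_g)$ and $\dim_{\mathbb{C}}\mathcal{O}_{n+1}/(g,J_g)=\tau(V)$ by definition of the Tjurina number,
\[
\dim_{\mathbb{C}}\frac{(g,J_g)}{(g,\Delta)}=\dim_{\mathbb{C}}\frac{\mathcal{O}_{n+1}}{(g,\Delta)}-\dim_{\mathbb{C}}\frac{\mathcal{O}_{n+1}}{(g,J_g)}=\mu(V)+\mu(f_V)-\tau(V).
\]
Combining with the isomorphism above yields $\mu_{BR}(f_V)=\mu(f_V)+\mu(V)-\tau(V)=\mu(f_V)+q(V)$, which is the reduced form of (\ref{MP-id}). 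Corollary \ref{cor-1} then follows immediately: by Saito's theorem \cite{Sa} one has $q(V)=0$ if and only if $V$ is quasihomogeneous, and in that case $\mu_{BR}(f,V)=\mu(f,V)=\mu(f)+\mu(f_V)$ is topological by (\ref{fun-rel-1}).

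The step I expect to be the main obstacle is the middle isomorphism $J_f/J_f(V)\cong(g,J_g)/(g,\Delta)$. Two points require care: that the kernel (syzygy) modules $\Theta_f^0,\Theta_g^0$ are genuinely generated by the Koszul/Hamiltonian fields — which rests on the partial derivatives forming a regular sequence, i.e. on the isolatedness of the singularities — and that it is $(g)\cap J_g$, rather than the smaller $gJ_g$, that arises as $\sigma(\Theta(V))$. This last discrepancy is exactly the torsion phenomenon: the failure of $g$ to be a non-zerodivisor modulo $J_g$, a feature tied to the non-quasihomogeneity of $V$ and ultimately reflected in the correction term $q(V)$. I would handle it by passing to the map induced by $\sigma$ on the quotient and a short diagram chase using $\Theta_g^0\subseteq\Theta(V)$, rather than by writing down explicit generators of $\Theta(V)$, which become awkward precisely in the non-quasihomogeneous case.
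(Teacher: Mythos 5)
Your proposal is correct, but it reaches Theorem \ref{MP} (and hence the corollary) by a genuinely different route than the paper. The final step is identical: both you and the paper quote Saito's theorem that $q(V)=\mu(V)-\tau(V)=0$ if and only if $V$ is quasihomogeneous, and both reduce everything to the identity $\mu_{BR}(f_V)=\mu(f_V)+\mu(V)-\tau(V)$ via (\ref{fun-rel-2}) and (\ref{fun-rel-1}). The difference is in how that identity is proved. The paper dualizes to differential forms (Lemmas \ref{lem-1} and \ref{lem-2}), introduces the Ferrari and K\"ahler complexes, and then uses \emph{two} theorems of Greuel: the torsion statement $\dim_{\mathbb{C}}T^n_V=\tau(V)$ (Lemma \ref{thm-coh-V}), which is how the Tjurina number enters, and the differential-form version (\ref{gr-le-for}) of the L\^e--Greuel formula; these are spliced together by the torsion exact sequence in Lemma \ref{lem-3}. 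You instead stay on the vector-field/ideal side: using Koszul exactness (the partials of $f$ form a regular sequence, so $\Theta_f^0$ is generated by the Hamiltonian fields, giving $\sigma(\Theta_f^0)=\Delta$) and the elementary identifications $\sigma(\Theta(V))=(g)\cap J_g$ and $J_g\cap\big((g)+\Delta\big)=(g)\cap J_g+\Delta$, you obtain $J_f/J_f(V)\cong (g,J_g)/(g,\Delta)$, after which the ideal-theoretic form of the L\^e--Greuel formula and the bare \emph{definition} $\tau(V)=\dim_{\mathbb{C}}\mathcal{O}_{n+1}/(g,J_g)$ finish the computation; all the steps check out (in particular the induced-map argument using $\Theta_g^0\subseteq\Theta(V)$ is sound). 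What your approach buys is economy: no Ferrari complex, no K\"ahler differentials, and no need for Greuel's torsion theorem — the Tjurina number appears tautologically. What the paper's approach buys is structure: the torsion module $T^n_V$ is exhibited as the precise obstruction, which feeds into the de Rham/Brieskorn/Gauss--Manin program announced in the introduction. Your closing remark is sharper than you may realize: under contraction with the volume form one has exactly $T^n_V\simeq\big((g)\cap J_g\big)/gJ_g$, so your discrepancy between $(g)\cap J_g$ and $gJ_g$ is not merely analogous to the torsion phenomenon — it \emph{is} the torsion module in ideal-theoretic guise, and the two proofs are dual to one another in this precise sense.
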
 

\begin{rem}
\label{rem-4}
The ``if'' part of the above corollary has been obtained in \cite{Ba}.
\end{rem}

\section{Proof of the Theorem}

The proof of the theorem relies on an interpretation of the Bruce-Roberts numbers in terms of differential forms, the consideration of the so called torsion differentials, and the well known L\^e-Greuel formula.

To start, let $\Omega^{\bullet}(V^*)\subset \Omega^{\bullet}$ be the subcomplex of germs of holomorphic forms which vanish when restricted (by pullback) to the tangent bundle of the smooth part $V^*$ of the hypersurface $V$. We call the quotient complex
\[\widetilde{\Omega}^{\bullet}_V:=\frac{\Omega^{\bullet}}{\Omega^{\bullet}(V^*)}\]
the Ferrari complex of $V$, since it was introduced by A. Ferrari in \cite{Fe} (but also used extensively by T. Bloom, M. Herrera \cite{BH}, G. M. Greuel \cite{Gr} and others).  The reason for introducing this complex in our context becomes apparent due to the following:

\begin{lem}
\label{lem-1}
Multiplication by a volume form $\omega$ gives an isomorphism:
\[J_f(V)\stackrel{\cdot \omega}{\simeq}df\wedge \Omega^n(V^*).\]
\end{lem}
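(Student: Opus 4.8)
The plan is to realise the stated multiplication map as a composite of two natural operations on the volume form: contraction (interior product) with a vector field, followed by exterior multiplication with $df$. Write $\iota_X$ for the interior product with a vector field $X$, and let $\omega=dx_0\wedge\cdots\wedge dx_n$ be the standard volume form. Since $\omega$ is a nowhere-vanishing top-degree form, contraction $X\mapsto \iota_X\omega$ is an $\mathcal{O}_{n+1}$-linear isomorphism of free modules $\Theta\xrightarrow{\sim}\Omega^n$. The engine of the proof is the elementary identity
\[df\wedge\iota_X\omega=(L_Xf)\,\omega,\qquad X\in\Theta,\]
which I would obtain by contracting the identically vanishing $(n+2)$-form $df\wedge\omega=0$ with $X$, using the antiderivation property of $\iota_X$ together with $\iota_X df=L_Xf$.

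Next I would show that the contraction isomorphism carries the submodule $\Theta(V)$ of logarithmic vector fields precisely onto $\Omega^n(V^*)$. This is the geometric heart of the argument and the step I expect to require the most care. Working at a smooth point $p\in V^*$, where $T_pV^*$ is a hyperplane, I would note that for $v_1,\dots,v_n\in T_pV^*$ one has $(\iota_X\omega)(v_1,\dots,v_n)=\omega(X,v_1,\dots,v_n)$; because $\omega$ is a volume form this vanishes for every such tuple if and only if $X_p$ lies in $T_pV^*$ (equivalently, $X_p,v_1,\dots,v_n$ are linearly dependent). Hence $\iota_X\omega$ pulls back to zero on $TV^*$ exactly when $X$ is tangent to $V^*$. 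Since both membership conditions, $X\in\Theta(V)$ and $\iota_X\omega\in\Omega^n(V^*)$, are conditions imposed on the (dense) smooth part $V^*$ and the objects involved are holomorphic, this pointwise equivalence upgrades to the module identity: $\iota_X\omega$ ranges over $\Omega^n(V^*)$ exactly as $X$ ranges over $\Theta(V)$.

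Finally I would assemble the pieces. Since $\Theta(V)$ is an $\mathcal{O}_{n+1}$-module, the set $J_f(V)=\{L_Xf:X\in\Theta(V)\}$ is an ideal, and the key identity yields
\[J_f(V)\cdot\omega=\{(L_Xf)\,\omega:X\in\Theta(V)\}=\{df\wedge\iota_X\omega:X\in\Theta(V)\}=df\wedge\Omega^n(V^*),\]
where the last equality invokes the surjection established in the previous paragraph. As multiplication by $\omega$ is injective on $\mathcal{O}_{n+1}$ (the module $\Omega^{n+1}$ of top forms being free of rank one, generated by $\omega$), this exhibits the asserted isomorphism. The only genuinely nontrivial point is the tangency/vanishing correspondence of the middle paragraph; everything else is formal manipulation of the volume form.
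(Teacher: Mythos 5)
Your proof is correct, and its skeleton coincides with the paper's: both rest on the identity $L_Xf\cdot\omega=df\wedge(X\lrcorner\omega)$ combined with the fact that contraction with $\omega$ carries $\Theta(V)$ isomorphically onto $\Omega^n(V^*)$. The difference is in how that correspondence is established. The paper argues algebraically: multiplying $a=X\lrcorner\omega$ by $dg\wedge$ gives $dg\wedge a=L_Xg\cdot\omega$, so $X(g)\in\langle g\rangle$ exactly when $dg\wedge a\in g\Omega^{n+1}$, and one concludes via the identification $\Omega^{n}(V^*)=\{a\in\Omega^n \,:\, dg\wedge a\in g\Omega^{n+1}\}$ supplied by the Poincar\'e residue sequence for $V^*$. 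You instead argue geometrically and pointwise: at each smooth point $p\in V^*$ the nondegeneracy of $\omega_p$ gives the linear-algebra equivalence $X_p\in T_pV^*\Leftrightarrow (X\lrcorner\omega)_p$ vanishes on $T_pV^*$, and since membership in $\Theta(V)$ and in $\Omega^n(V^*)$ are both conditions quantified over all points of $V^*$, the module-level equivalence follows at once (your extra appeal to density and holomorphy is not actually needed). What each route buys: yours is more elementary and self-contained, working straight from the pullback definition of $\Omega^n(V^*)$ and the tangency definition of $\Theta(V)$, and it bypasses the residue-sequence characterization, which is a genuine input (its proof amounts to the same pointwise linear algebra plus the reducedness of $V$ to pass from vanishing along $V^*$ to divisibility by $g$). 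The paper's route stays entirely within the framework of ideals and the defining equation $g$, matching the algebraic characterizations ($X(g)\in\langle g\rangle$, $dg\wedge a\in g\Omega^{n+1}$) that the rest of the paper manipulates. Your explicit derivation of the key identity by contracting $df\wedge\omega=0$, and your closing observations that $\{L_Xf : X\in\Theta(V)\}$ is already an ideal and that multiplication by $\omega$ is injective, are correct and slightly more detailed than the paper's treatment.
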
 
\begin{proof}
It follows immediately by the identity:
\[L_Xf\cdot \omega=df\wedge (X\lrcorner \omega)\]
and the fact that $\omega$ induces an isomorphism
\[\Theta(V)\stackrel{\cdot \omega}{\simeq} \Omega^n(V^*),\]
so that $a=X\lrcorner \omega \in \Omega^n(V^*)$ if and only if $X\in \Theta(V)$. To see that the latter isomorphism is true, one may multiply $a=X\lrcorner \omega$ with $dg\wedge$:
\[dg\wedge a=dg\wedge (X\lrcorner \omega)=L_Xg\cdot \omega,\] 
and notice that (by the Poincar\'e residue short exact sequence for $V^*$) there is an identification:
\[\Omega^{n}(V^*)=\{a\in \Omega^n/dg\wedge a \in g\Omega^{n+1}\}.\]
\end{proof}

From this we obtain the following interpretation of short exact sequence (\ref{ses-la-1}) in terms of differential forms:
\begin{lem}
\label{lem-2} 
The short exact sequence of local algebras (\ref{ses-la-1}) is isomorphic to the short exact sequence:
\begin{equation}
\label{ses-df-1}
0\rightarrow \widetilde{\Omega}_{f_V}\stackrel{df\wedge}{\rightarrow}\Omega_f(V^*)\rightarrow \Omega_f\rightarrow 0,
\end{equation}
where:
\[\Omega_f:=\frac{\Omega^{n+1}}{df\wedge \Omega^n}, \quad \Omega_f(V^*):=\frac{\Omega^{n+1}}{df\wedge \Omega^n(V^*)},\]
\[\widetilde{\Omega}_{f_V}:=\frac{\widetilde{\Omega}^n_V}{df\wedge \widetilde{\Omega}^{n-1}_V}\simeq \frac{\Omega^n}{df\wedge \Omega^{n-1}+\Omega^n(V^*)}.\]
\end{lem}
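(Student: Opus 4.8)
The plan is to transport the short exact sequence (\ref{ses-la-1}) term by term through multiplication by the volume form $\omega$, and then to identify the resulting kernel term by means of the de Rham--Saito exactness lemma for the isolated singularity $f$. First I would record that $\cdot\omega$ induces an isomorphism $\mathcal{O}_{n+1}\simeq\Omega^{n+1}$, $h\mapsto h\omega$. Applying the identity $L_Xf\cdot\omega=df\wedge(X\lrcorner\omega)$ of Lemma \ref{lem-1} once for $X$ ranging over all of $\Theta$ (the unconstrained case) and once for $X\in\Theta(V)$ --- using that $\cdot\omega$ carries $\Theta$ isomorphically onto $\Omega^n$ and, by Lemma \ref{lem-1}, $\Theta(V)$ isomorphically onto $\Omega^n(V^*)$ --- gives $J_f\cdot\omega=df\wedge\Omega^n$ and $J_f(V)\cdot\omega=df\wedge\Omega^n(V^*)$. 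Hence $\cdot\omega$ descends to isomorphisms $\mathcal{Q}_f\simeq\Omega_f$ and $\mathcal{Q}_f(V)\simeq\Omega_f(V^*)$, under which the surjection $\mathcal{Q}_f(V)\rightarrow\mathcal{Q}_f$ becomes the natural projection $\Omega_f(V^*)\rightarrow\Omega_f$ coming from the inclusion $df\wedge\Omega^n(V^*)\subseteq df\wedge\Omega^n$. This already reproduces the right-hand portion of (\ref{ses-df-1}).

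It then remains to identify the kernel $\mathcal{Q}_{f_V}=J_f/J_f(V)$, which under $\cdot\omega$ becomes $(df\wedge\Omega^n)/(df\wedge\Omega^n(V^*))$, with $\widetilde{\Omega}_{f_V}$. I would start from the map $df\wedge\colon\Omega^n\to df\wedge\Omega^n$ and note that, since $\Omega^{\bullet}(V^*)$ is an ideal of $\Omega^{\bullet}$ (pullback to $V^*$ respects the wedge product), it sends $\Omega^n(V^*)$ into $df\wedge\Omega^n(V^*)$, so it descends to a surjection $\widetilde{\Omega}^n_V=\Omega^n/\Omega^n(V^*)\twoheadrightarrow(df\wedge\Omega^n)/(df\wedge\Omega^n(V^*))$. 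The heart of the matter is computing its kernel: a class $[a]$ dies exactly when $df\wedge a\in df\wedge\Omega^n(V^*)$, i.e. $df\wedge(a-b)=0$ for some $b\in\Omega^n(V^*)$. Here I invoke the de Rham--Saito lemma: because $f$ has an isolated singularity, its partials form a regular sequence and the Koszul-type complex $(\Omega^{\bullet},df\wedge)$ is exact at $\Omega^n$, so $\ker(df\wedge|_{\Omega^n})=df\wedge\Omega^{n-1}$. Thus $a\in\Omega^n(V^*)+df\wedge\Omega^{n-1}$, and the kernel is precisely the image of $df\wedge\Omega^{n-1}$ in $\widetilde{\Omega}^n_V$, namely $df\wedge\widetilde{\Omega}^{n-1}_V$. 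This yields $(df\wedge\Omega^n)/(df\wedge\Omega^n(V^*))\simeq\widetilde{\Omega}^n_V/(df\wedge\widetilde{\Omega}^{n-1}_V)=\widetilde{\Omega}_{f_V}$, and simultaneously the displayed presentation $\widetilde{\Omega}_{f_V}\simeq\Omega^n/(df\wedge\Omega^{n-1}+\Omega^n(V^*))$.

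Finally I would check compatibility of the connecting maps. Tracing a representative $[a]\in\widetilde{\Omega}_{f_V}$ through these identifications sends it to $[df\wedge a]\in\Omega_f(V^*)$, so the inclusion $\mathcal{Q}_{f_V}\hookrightarrow\mathcal{Q}_f(V)$ indeed becomes the map labelled $df\wedge$ in (\ref{ses-df-1}); well-definedness (a form in $df\wedge\Omega^{n-1}+\Omega^n(V^*)$ wedges with $df$ into $df\wedge\Omega^n(V^*)$) together with the kernel computation above gives its injectivity, and exactness at the middle term follows by the same diagram chase. The one genuinely nontrivial input is the de Rham--Saito exactness at $\Omega^n$; everything else is a routine transport of the isomorphism $\cdot\omega$ across the three terms. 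I therefore expect the main obstacle to be the kernel computation, that is, confirming that the division property $df\wedge(a-b)=0\Rightarrow a-b\in df\wedge\Omega^{n-1}$ is exactly what the isolated-singularity hypothesis on $f$ guarantees.
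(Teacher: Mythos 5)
Your proof is correct and takes essentially the same route as the paper: transport the sequence (\ref{ses-la-1}) through multiplication by the volume form via Lemma \ref{lem-1}, and identify the kernel term $(df\wedge\Omega^n)/(df\wedge\Omega^n(V^*))$ with $\widetilde{\Omega}_{f_V}$. The only difference is that you make explicit the de Rham--Saito division step ($\ker(df\wedge|_{\Omega^n})=df\wedge\Omega^{n-1}$, valid since $f$ has an isolated singularity), which the paper's ``it suffices to notice'' leaves implicit --- a worthwhile clarification, since that is indeed the only nontrivial input.
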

\begin{proof}
Since $df\wedge \Omega^n(V^*)\subset df\wedge \Omega^n$ we obtain a short exact sequence:
\[0\rightarrow \frac{df\wedge \Omega^n}{df\wedge \Omega^n(V^*)}\rightarrow \frac{\Omega^{n+1}}{df\wedge \Omega^n(V^*)}\rightarrow \frac{\Omega^{n+1}}{df\wedge \Omega^{n}}\rightarrow 0,\]
which, by the previous Lemma \ref{lem-1}, becomes isomorphic to (\ref{ses-la-1}) after multiplication with a volume form. To obtain (\ref{ses-df-1}) it suffices to notice that 
\[\frac{df\wedge \Omega^n}{df\wedge \Omega^{n}(V^*)}\simeq df\wedge \big{(}\frac{\Omega^{n}}{df\wedge \Omega^{n-1}+\Omega^n(V^*)} \big{)}\simeq df\wedge \widetilde{\Omega}_{f_V}.\]
\end{proof}

\begin{rem}
\label{rem-5}
From the lemma we obtain in particular the identity (\ref{fun-rel-2}):
\[\mu_{BR}(f,V)=\mu(f)+\mu_{BR}(f_V),\]
where:
\[\mu(f)=\dim_{\mathbb{C}}\Omega_f,\quad \mu_{BR}(f,V)=\dim_{\mathbb{C}}\Omega_f(V^*), \quad \]
\[\mu_{BR}(f_V)=\dim_{\mathbb{C}}\widetilde{\Omega}_{f_V}.\]
Moreover, the terms appearing above are the last terms of the obvious relative de Rham complexes but this is irrelevant in the present exposition.
\end{rem}

To continue with the proof we will need to consider also the well known complex of K\"ahler differentials. Denote by $\Omega^{\bullet}(V)\subset \Omega^{\bullet}$ the subcomplex of holomorphic forms which vanish identically along the points of $V$:
\[\Omega^{\bullet}(V):=dg\wedge \Omega^{\bullet-1}+g\Omega^{\bullet}.\]
The quotient complex:
\[\Omega^{\bullet}_V:=\frac{\Omega^{\bullet}}{\Omega^{\bullet}(V)}\]
is usually called the Grauert-Grothendieck (or K\"ahler) complex of $V$. Since $\Omega^{\bullet}(V)\subseteq \Omega^{\bullet}(V^*)$ (obvious) we have a natural projection $\Omega^{\bullet}_V\rightarrow \widetilde{\Omega}^{\bullet}_V$ and in particular a short exact sequence of complexes:
\begin{equation}
\label{ses-tor}
0\rightarrow T^{\bullet}_V\rightarrow \Omega^{\bullet}_V\rightarrow \widetilde{\Omega}^{\bullet}_V\rightarrow 0,
\end{equation}
where the kernel complex:
\[T^{\bullet}_V=\frac{\Omega^{\bullet}(V^*)}{\Omega^{\bullet}(V)}\]
can be identified with the torsion subcomplex $\mbox{Tor}\Omega^{\bullet}_V$ of $\Omega^{\bullet}_V$ (here is where we need $V$ to be reduced). Indeed, the complexes $\Omega^{\bullet}_V$ and $\widetilde{\Omega}^{\bullet}_V$ are equal on the smooth part $V^*$ of $V$ and in particular, any torsion element vanishes on the smooth part of $V$, so $\mbox{Tor}\Omega^{\bullet}_V\subseteq T^{\bullet}_V$. On the other hand, any element in the kernel complex is obviously torsion and thus $T^{\bullet}_V\subseteq \mbox{Tor}\Omega^{\bullet}_V$, from which we obtain the identifications:
\[T^{\bullet}_V\simeq \mbox{Tor}\Omega^{\bullet}_V, \quad \widetilde{\Omega}^{\bullet}_V\simeq \frac{\Omega^{\bullet}_V}{\mbox{Tor}\Omega^{\bullet}_V}.\]
The following lemma concerning the dimension of the space of these torsion differentials will be very useful in what follows: 

\begin{lem}[G. M. Greuel \cite{Gr}]
\label{thm-coh-V}
\noindent

\begin{itemize}
\item[(i)] $T^p_V=0$ for $p<n$ and  
\item[(ii)] $\dim_{\mathbb{C}}T^n_V=\tau(V)$, where $\tau(V)$ is the Tjurina number of $V$.
\end{itemize}
\end{lem}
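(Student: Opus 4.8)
The plan is to identify the torsion complex $T^\bullet_V$ with a Koszul cohomology of the Jacobian ideal $J_g=(\partial_0 g,\dots,\partial_n g)$ and then read off both assertions from a single short free resolution. The starting point is to upgrade the residue description already exploited in the proof of Lemma \ref{lem-1} to all degrees, namely to show that, as germs at the origin,
\[\Omega^p(V^*)=\{a\in\Omega^p: dg\wedge a\in g\,\Omega^{p+1}\}.\]
At a smooth point of $V$ one may take $g$ itself as a coordinate, and a direct local computation shows that the vanishing of $a$ on $TV^*$ is there equivalent to $dg\wedge a\in g\Omega^{p+1}$. To pass from this pointwise condition on $V^*$ to a genuine germ identity at $0$, I would observe that $dg\wedge a/g$ is then holomorphic on $\mathbb{C}^{n+1}\setminus\{0\}$ (it is automatically holomorphic off $V$), and so extends across the origin by Hartogs, the origin having codimension $n+1\ge 2$. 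This is the sole analytic input and the place that requires a little care.

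Granting this description, reducing modulo $g$ turns it into a cohomological statement. Writing $\bar\Omega^p=\Omega^p/g\Omega^p$ and noting $g\Omega^p\subset\Omega^p(V^*)$, the quotient $\Omega^p(V^*)/g\Omega^p$ becomes the kernel of $dg\wedge\colon\bar\Omega^p\to\bar\Omega^{p+1}$, while $\Omega^p(V)/g\Omega^p$ becomes the image of $dg\wedge$ from $\bar\Omega^{p-1}$. Hence
\[T^p_V\;\simeq\;H^p\big(\bar\Omega^\bullet,\,dg\wedge\big),\]
the cohomology of the Koszul cochain complex of the Jacobian sequence $\partial_0 g,\dots,\partial_n g$ taken over the ring $\mathcal{O}_V=\mathcal{O}_{n+1}/(g)$.

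Now the hypothesis that $V$ has an isolated singularity enters. The partials $\partial_0 g,\dots,\partial_n g$ cut out only the origin, hence form a regular sequence in the regular local ring $\mathcal{O}_{n+1}$, so the augmented complex $0\to\Omega^0\to\cdots\to\Omega^{n+1}\to\mathcal{O}_{n+1}/J_g\to 0$ is exact. Setting $P_j=\Omega^{n+1-j}$ therefore exhibits $(\Omega^\bullet,dg\wedge)$ as a free resolution of the Milnor algebra $\mathcal{O}_{n+1}/J_g$. Tensoring with $\mathcal{O}_V$ and comparing degrees gives
\[T^p_V\;\simeq\;\mathrm{Tor}^{\mathcal{O}_{n+1}}_{\,n+1-p}\big(\mathcal{O}_{n+1}/J_g,\;\mathcal{O}_{n+1}/(g)\big).\]
These $\mathrm{Tor}$ groups I would compute from the two term resolution $0\to\mathcal{O}_{n+1}\xrightarrow{\,g\,}\mathcal{O}_{n+1}\to\mathcal{O}_{n+1}/(g)\to 0$: they are the homology of the complex $[\,\mathcal{O}_{n+1}/J_g\xrightarrow{\,g\,}\mathcal{O}_{n+1}/J_g\,]$. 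Thus they vanish in homological degree $\ge 2$; the degree $0$ term is $\mathcal{O}_{n+1}/(J_g+(g))$, the Tjurina algebra of dimension $\tau(V)$; and the degree $1$ term is the kernel of multiplication by $g$ on the Milnor algebra.

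The two statements then drop out. For $p<n$ the relevant index $n+1-p$ is $\ge 2$, whence $T^p_V=0$, which is (i). For $p=n$ one gets $T^n_V\simeq\mathrm{Tor}_1$, the kernel of multiplication by $g$ on $\mathcal{O}_{n+1}/J_g$; since $g$ acts as an endomorphism of the finite dimensional Milnor algebra, its kernel and cokernel have equal dimension, and the cokernel is exactly the Tjurina algebra $\mathrm{Tor}_0$. Hence $\dim_{\mathbb{C}}T^n_V=\dim_{\mathbb{C}}\mathrm{Tor}_0=\tau(V)$, which is (ii). I expect the main obstacle to be the first step, converting the pointwise smooth locus description of $\Omega^p(V^*)$ into a germ identity at the origin, rather than the homological bookkeeping; the pleasant point is (ii), where the equality $\dim\ker=\dim\mathrm{coker}$ for the endomorphism ``multiplication by $g$'' yields $\tau(V)$ even though $T^n_V$ is not literally the Tjurina algebra.
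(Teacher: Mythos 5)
Your proposal is correct, but there is nothing in the paper to compare it against step by step: the paper does not prove this lemma at all, it imports it from Greuel \cite{Gr}, where it is established in the wider setting of isolated complete intersection singularities. Your argument is therefore a genuinely different, self-contained route, made possible by specializing to hypersurfaces, and all three of its steps are sound. Step one, the identification $\Omega^p(V^*)=\{a\in\Omega^p:\, dg\wedge a\in g\,\Omega^{p+1}\}$ via the smooth-point computation plus Hartogs extension across the origin, is legitimate precisely because $\mathrm{Sing}\,V=\{0\}$ and $n+1\geq 2$; it is the all-degrees upgrade of the degree-$n$ identification the paper itself invokes (via the Poincar\'e residue sequence) in the proof of Lemma \ref{lem-1}. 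Step two, $T^p_V\simeq H^p\bigl(\Omega^\bullet/g\Omega^\bullet,\,dg\wedge\bigr)\simeq\mathrm{Tor}^{\mathcal{O}_{n+1}}_{n+1-p}\bigl(\mathcal{O}_{n+1}/J_g,\,\mathcal{O}_{n+1}/(g)\bigr)$, is valid because the isolated singularity hypothesis makes $\partial_0 g,\dots,\partial_n g$ a regular sequence in the Cohen--Macaulay ring $\mathcal{O}_{n+1}$, so the Koszul complex $(\Omega^\bullet,dg\wedge)$ resolves the Milnor algebra. Step three, balancing Tor against the length-one resolution $0\to\mathcal{O}_{n+1}\stackrel{g}{\to}\mathcal{O}_{n+1}\to\mathcal{O}_V\to 0$, kills $\mathrm{Tor}_{\geq 2}$ (giving (i)) and identifies $T^n_V$ with the kernel of multiplication by $g$ on the Milnor algebra, whose dimension equals that of the cokernel, i.e.\ of the Tjurina algebra (giving (ii)); your closing remark that $T^n_V$ is only equidimensional with, not equal to, the Tjurina algebra is accurate (it is $T^{n+1}_V\simeq\mathrm{Tor}_0$ that is literally the Tjurina algebra). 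As for what each approach buys: the citation to Greuel covers the general ICIS machinery that the paper needs anyway for Lemma \ref{gr-le} (the L\^e--Greuel formula, which your argument does not touch), whereas your proof makes the torsion lemma elementary and self-contained in the only case in which the paper actually uses it.
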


\begin{rem}
Torsion differentials are sometimes interpreted in terms of local cohomology (c.f. \cite{Gr}, \cite{Loo})
\end{rem}

Now, following Greuel \cite{Gr} we can define, using the K\"ahler complex of $V$ one more module of relative forms for the complete intersection $f_V$:
\[\Omega_{f_V}:=\frac{\Omega^n_V}{df\wedge \Omega^{n-1}_V}\simeq \frac{\Omega^{n}}{df\wedge \Omega^{n-1}+\Omega^n(V)}=\]
\[=\frac{\Omega^{n}}{df\wedge \Omega^{n-1}+dg\wedge \Omega^{n-1}+g\Omega^n}.\]
Denote its dimension by 
\[\mu_G(f_V):=\dim_{\mathbb{C}}\Omega_{f_V}.\]
The following theorem is the famous Greuel formula in its original form (also proved by L\^e D. Tr\'ang in a different form \cite{Le1}) relating the Milnor number $\mu(f_V)$ of the isolated complete intersection $f_V$ with the analytic invariant $\mu_G(f_V)$:

\begin{lem}[G. M. Greuel \cite{Gr}]
\label{gr-le}
For the isolated complete intersection $f_V$ the following formula holds:
\begin{equation}
\label{gr-le-for}
\mu_G(f_V)=\mu(f_V)+\mu(V).
\end{equation}
\end{lem}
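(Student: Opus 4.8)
The statement is the L\^e--Greuel formula, so the plan is to prove it in two independent halves and then match the answers. First I would reduce the analytic quantity $\mu_G(f_V)=\dim_{\mathbb{C}}\Omega_{f_V}$ to the colength of a Jacobian-type ideal, and then I would compute that colength geometrically as a count of Morse points. Concretely, I would show
\[
\mu_G(f_V)=\dim_{\mathbb{C}}\frac{\mathcal{O}_{n+1}}{(g)+I_2}=\mu(f_V)+\mu(V),
\]
where $I_2$ denotes the ideal generated by the $2\times 2$ minors $f_ig_j-f_jg_i$ (with $f_i=\partial_i f$, $g_i=\partial_i g$) of the Jacobian matrix of the map $(f,g)$.

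For the \emph{algebraic half} I would reuse the contraction isomorphism $\Omega^n\simeq\Theta$, $X\mapsto X\lrcorner\omega$, already exploited in Lemma \ref{lem-1}. From the identity $df\wedge\big((X\wedge Y)\lrcorner\omega\big)=X(f)\,(Y\lrcorner\omega)-Y(f)\,(X\lrcorner\omega)$ one reads off that $df\wedge\Omega^{n-1}$ corresponds to the submodule of $\Theta$ generated by the Koszul relations $f_i\partial_j-f_j\partial_i$, and likewise $dg\wedge\Omega^{n-1}$ to the relations $g_i\partial_j-g_j\partial_i$. Hence $\Omega_{f_V}\simeq\Theta/M$ with $M=\mathrm{Syz}(\nabla f)+\mathrm{Syz}(\nabla g)+g\,\Theta$, where I use that an isolated singularity makes $\nabla f$ (resp.\ $\nabla g$) a regular sequence, so that its module of syzygies is exactly the Koszul one. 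Pushing forward along $\Psi\colon X\mapsto(X(f),X(g))$ and observing that $\ker\Psi=\mathrm{Syz}(\nabla f)\cap\mathrm{Syz}(\nabla g)\subseteq M$, the computation collapses to the colength $\dim_{\mathbb{C}}\mathcal{O}_{n+1}/((g)+I_2)$; this is the standard determinantal identification and I would only sketch the syzygy bookkeeping rather than grind through it.

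For the \emph{geometric half} I would interpret this colength as a number of critical points. The ideal $(g)+I_2$ is $\mathfrak{m}$-primary precisely because $f_V$ is an isolated complete intersection (the hypothesis of the theorem), so by conservation of multiplicity its colength equals the number $\nu$ of critical points of $f$ restricted to the smoothing $V_s=\{g=s\}$ for generic small $s$, all nondegenerate. Morsifying $f$ on $V_s$, the Milnor fibre $V_s\simeq\bigvee_{\mu(V)}S^n$ is built from the generic level $V_s\cap f^{-1}(c)\simeq\bigvee_{\mu(f_V)}S^{n-1}$ (Milnor's and Hamm's theorems) by attaching one $n$-cell per critical point. Comparing Euler characteristics, $1+(-1)^n\mu(V)=\big(1+(-1)^{n-1}\mu(f_V)\big)+(-1)^n\nu$, yields $\nu=\mu(V)+\mu(f_V)$, which closes the argument.

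The step I expect to be the main obstacle is the passage from the analytic colength to the geometric count, i.e.\ the conservation-of-number (flatness/finiteness) argument together with the Morse-theoretic handle decomposition respecting the stratification of $V$; the purely algebraic identification of $\dim_{\mathbb{C}}\Omega_{f_V}$ with $\dim_{\mathbb{C}}\mathcal{O}_{n+1}/((g)+I_2)$ is routine once the regular-sequence property is invoked. I would also note the alternative route hinted at in the introduction, deriving $\nu=\mu(V)+\mu(f_V)$ from Malgrange's index formula for the relative Gauss--Manin connection on the associated Brieskorn module, which gives the same equality without the topological count.
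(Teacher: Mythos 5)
The paper does not prove this lemma at all: it imports it verbatim from Greuel \cite{Gr}, where it is established via the Gauss--Manin connection and freeness of the relative Brieskorn module. Your polar-curve/conservation strategy is therefore necessarily a different route (essentially L\^e's), and its architecture is sound; but the step you dismiss as ``routine syzygy bookkeeping'' is precisely where real content sits, and as written it has a gap. Pushing $\Theta/M$ forward along $\Psi$ only gives $\Omega_{f_V}\simeq C/\bigl((I_2\oplus I_2)+gC\bigr)$ with $C=\mathrm{im}\,\Psi$, or, projecting to the first coordinate, $\Omega_{f_V}\simeq J_f/(I_2+gJ_f)$; neither of these is $\mathcal{O}_{n+1}/((g)+I_2)$, and the ``collapse'' does not happen by bookkeeping alone. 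To equate lengths you need the nontrivial fact that $g$ is a nonzerodivisor on $\mathcal{O}_{n+1}/I_2$. Granting it, one gets $(g)\cap(I_2+gJ_f)=gJ_f$, hence $\dim_{\mathbb{C}}((g)+I_2)/(I_2+gJ_f)=\dim_{\mathbb{C}}(g)/gJ_f=\mu(f)$, and computing $\dim_{\mathbb{C}}\mathcal{O}_{n+1}/(I_2+gJ_f)$ in two ways (through $J_f$ and through $(g)+I_2$, using $I_2\subseteq J_f$) yields $\dim_{\mathbb{C}}J_f/(I_2+gJ_f)=\dim_{\mathbb{C}}\mathcal{O}_{n+1}/((g)+I_2)$. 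The nonzerodivisor property itself requires (a) $\dim V(I_2)\le 1$, which does follow from the hypothesis that $f|_{V^*}$ is a submersion (a component of $V(I_2)$ of dimension $\ge 2$ would meet the hypersurface $V$ in positive dimension), and (b) Cohen--Macaulayness, hence unmixedness, of the determinantal ring $\mathcal{O}_{n+1}/I_2$ (Eagon--Northcott, applicable exactly because the codimension $n$ is the generic one). This same Cohen--Macaulay input is what makes your conservation-of-number step legitimate -- you did flag that half as the main obstacle, but the algebraic half needs it too and is not routine.

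There is also a concretely false claim in the geometric half: for generic small $s$ the critical points of $f|_{V_s}$ need \emph{not} be nondegenerate. Take $f=x^2+y^3$ and $g=x$ in $\mathbb{C}^2$ (all hypotheses of the paper hold, with $V$ smooth): for every $s\neq 0$ the restriction $f|_{V_s}=s^2+y^3$ has a single critical point, of type $A_2$, so no choice of $s$ Morsifies it. What conservation of number actually delivers, and what you should use, is the multiplicity-weighted count $\dim_{\mathbb{C}}\mathcal{O}_{n+1}/((g)+I_2)=\sum_p\mu_p(f|_{V_s})$; correspondingly, passing a critical value attaches $\mu_p$ cells of dimension $n$ (the cone on the local vanishing cycles), not one cell per critical point. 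With this weighted count your Euler-characteristic identity goes through verbatim and still gives $\sum_p\mu_p=\mu(V)+\mu(f_V)$: in the example, $\chi(V_s)=1=3-2=\chi(V_s\cap f^{-1}(c))+(-1)^n\sum_p\mu_p$, consistent with $\mu(V)+\mu(f_V)=0+2$. (Alternatively you may Morsify by perturbing $f$ as well, but then you owe a separate argument that neither the colength nor the topological count changes under that perturbation.)
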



Finally, we will need the following lemma which relates the Bruce-Roberts number $\mu_{BR}(f_V)$ with Greuel's invariant $\mu_G(f_V)$ defined above:

\begin{lem}
\label{lem-3}
For the isolated complete intersection $f_V$ the following formula holds:
\begin{equation}
\label{br-n}
\mu_G(f_V)=\mu_{BR}(f_V)+\tau(V).
\end{equation}
\end{lem}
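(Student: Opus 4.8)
The plan is to read off formula (\ref{br-n}) from the torsion sequence (\ref{ses-tor}) by showing that, upon passing to the relative modules of Lemma \ref{lem-2}, the torsion module $T^n_V$ contributes its full dimension $\tau(V)$ (Lemma \ref{thm-coh-V}) to the difference $\mu_G(f_V)-\mu_{BR}(f_V)$. First I would exploit that $T^{n-1}_V=0$ (Lemma \ref{thm-coh-V}(i)), so that $\widetilde{\Omega}^{n-1}_V=\Omega^{n-1}_V$ and $\widetilde{\Omega}^n_V=\Omega^n_V/T^n_V$; hence
\[
\widetilde{\Omega}_{f_V}=\frac{\Omega^n_V}{df\wedge\Omega^{n-1}_V+T^n_V},\qquad \Omega_{f_V}=\frac{\Omega^n_V}{df\wedge\Omega^{n-1}_V}.
\]
Thus the canonical projection $\Omega_{f_V}\twoheadrightarrow\widetilde{\Omega}_{f_V}$ is surjective with kernel
\[
\frac{df\wedge\Omega^{n-1}_V+T^n_V}{df\wedge\Omega^{n-1}_V}\;\simeq\;\frac{T^n_V}{T^n_V\cap\bigl(df\wedge\Omega^{n-1}_V\bigr)}.
\]
(Equivalently, applying the snake lemma to (\ref{ses-tor}) with the vertical maps $df\wedge$ yields $0\to K_\Omega\to K_{\widetilde\Omega}\xrightarrow{\delta}T^n_V\to\Omega_{f_V}\to\widetilde{\Omega}_{f_V}\to0$, where $K_\Omega,K_{\widetilde\Omega}$ are the kernels of $df\wedge$.)

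Taking dimensions, and using $\dim_{\mathbb{C}}T^n_V=\tau(V)$, the identity (\ref{br-n}) becomes equivalent to the single vanishing statement
\[
(\star)\qquad T^n_V\cap\bigl(df\wedge\Omega^{n-1}_V\bigr)=0\quad\text{in }\Omega^n_V,
\]
i.e. the assertion that the submodule $df\wedge\Omega^{n-1}_V$ is \emph{torsion-free} (in snake-lemma terms, that the connecting map $\delta$ is zero, so that $T^n_V$ injects into $\Omega_{f_V}$). The heart of the proof, and the step I expect to be the main obstacle, is precisely $(\star)$: every equivalent reformulation I have tried — saturatedness of $K_\Omega$, or the fact that $T^n_V$ is annihilated by the Jacobian ideal $J_g$ — circles back to the same torsion-freeness, so it genuinely requires a division-type input rather than a formal argument.

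To prove $(\star)$ I would use the submersivity hypothesis. Along $V^*$ the restriction $f_V$ is a submersion, so $df$ is a nowhere-vanishing relative $1$-form there and $df\wedge\colon\Omega^{n-1}_{V^*}\to\Omega^n_{V^*}$ is surjective onto the invertible sheaf $\Omega^n_{V^*}$; consequently $df\wedge\Omega^{n-1}_V$ and $\Omega^n_V$ coincide off the origin, and any element of $T^n_V\cap df\wedge\Omega^{n-1}_V$ is a section of a generically rank-one sheaf supported at the origin. To upgrade this to genuine vanishing I would lift to the ambient germ: using the Poincar\'e residue characterization $\Omega^n(V^*)=\{a:dg\wedge a\in g\Omega^{n+1}\}$ from Lemma \ref{lem-1}, statement $(\star)$ translates into the division assertion
\[
df\wedge dg\wedge\beta\in g\,\Omega^{n+1}\ \Longrightarrow\ df\wedge\beta\in dg\wedge\Omega^{n-1}+g\,\Omega^n .
\]
Reducing modulo $g$, this is exactly the exactness at $\Omega^{n-1}\otimes\mathcal{O}_V$ of the Koszul-type complex of the pair $(df,dg)$, which I would obtain from a de Rham--Saito division lemma; indeed the ICIS/submersion hypothesis forces $df$ and $dg$ to be independent along $V^*$, so the ideal of $2\times2$ minors of their coefficient matrix is $\mathfrak{m}$-primary on $V$, of depth $n$.

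The delicate point — and where the submersion hypothesis rather than a crude dimension count must be used — is checking that this depth really suffices to run the division at the relevant spot of the complex (the precise Buchsbaum--Eisenbud acyclicity bounds are comfortable in high dimension but tight in low dimension; the planar case $n=1$ being exactly the one treated separately by Wall, cf. Remark \ref{rem-3}). One could alternatively deduce the same torsion-freeness directly from Greuel's analysis of the K\"ahler complex of the complete intersection $f_V$. Once $(\star)$ is established, the kernel computed above is all of $T^n_V$, so
\[
\mu_G(f_V)=\dim_{\mathbb{C}}\Omega_{f_V}=\dim_{\mathbb{C}}\widetilde{\Omega}_{f_V}+\dim_{\mathbb{C}}T^n_V=\mu_{BR}(f_V)+\tau(V),
\]
which is (\ref{br-n}).
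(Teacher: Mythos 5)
Your architecture is the same as the paper's: both proofs compare $\Omega_{f_V}$ with $\widetilde{\Omega}_{f_V}$ through the torsion sequence (\ref{ses-tor}), aim at the short exact sequence $0\to T^n_V\to\Omega_{f_V}\to\widetilde{\Omega}_{f_V}\to 0$, and then count dimensions using Lemma \ref{thm-coh-V}. The difference is that you are more careful about exactness on the left: you observe, correctly, that the kernel of $\Omega_{f_V}\to\widetilde{\Omega}_{f_V}$ is a priori only $T^n_V/\bigl(T^n_V\cap df\wedge\Omega^{n-1}_V\bigr)$, so that (\ref{br-n}) is \emph{equivalent} to your vanishing statement $(\star)$. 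The paper settles this point in one clause, asserting that $T^{n-1}_V=0$ implies $df\wedge\Omega^{n-1}_V\simeq df\wedge\widetilde{\Omega}^{n-1}_V$; that inference is not formal --- the isomorphism $\Omega^{n-1}_V\simeq\widetilde{\Omega}^{n-1}_V$ makes the projection between the two images surjective, but its injectivity is precisely $T^n_V\cap df\wedge\Omega^{n-1}_V=0$, i.e.\ $(\star)$ again. So your diagnosis of where the real content sits is correct, and indeed sharper than the paper's own wording of the same step.

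The genuine gap is that $(\star)$ is never proved, and the tool you invoke cannot prove it as stated. Your reformulation --- exactness of the Koszul-type complex of the pair $(df,dg)$ at $\Omega^{n-1}\otimes\mathcal{O}_V$ --- is strictly stronger than $(\star)$ (it divides $\beta$ itself, not just $df\wedge\beta$), and Saito's division lemma at that spot would need the ideal of $2\times 2$ minors to have depth at least $(n-1)+2=n+1$ on $V$, whereas the hypotheses give exactly $n$: the ``off by one'' you worry about is real in every dimension, not only low ones, and the fallback of citing ``Greuel's analysis'' is a deferral, not an argument. The step can, however, be closed by dividing only what is needed. Sending a torsion class $[a]\in T^n_V$ to the function $h$ with $dg\wedge a=gh\,\omega$ ($\omega$ a volume form) embeds $T^n_V$ into $\mathcal{O}_{n+1}/J_g$ (injectivity here is de Rham division for $dg$, available since $g$ has an isolated singularity); under this embedding the classes of the forms $df\wedge\beta$ correspond exactly to the elements of the colon ideal $(I_2:g)$ modulo $J_g$, where $I_2\subseteq J_g$ is the ideal of $2\times 2$ minors of the Jacobian matrix of $(f,g)$, and $(\star)$ becomes the inclusion $(I_2:g)\subseteq J_g$. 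Now the submersion hypothesis gives $V(I_2)\cap V=\{0\}$, hence every component of the germ $V(I_2)$ has dimension at most $1$; so $I_2$ (when proper; otherwise $V$ is smooth and $T^n_V=0$) attains the maximal codimension $n$ allowed for such a determinantal ideal, $\mathcal{O}_{n+1}/I_2$ is Cohen--Macaulay of dimension one by Eagon--Northcott, and $g$, vanishing only at the origin on $V(I_2)$, is a nonzerodivisor on it. Therefore $(I_2:g)=I_2\subseteq J_g$, which is $(\star)$. This vindicates your instinct that a division-type input tied to submersivity is indispensable --- a point on which the paper's own proof is silent --- but as written your proposal, like the paper, leaves the lemma unproven at exactly that point.
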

\begin{proof}
The natural projection $\Omega^n_V\rightarrow \widetilde{\Omega}^n_V$ gives another natural projection of relative forms $\Omega_{f_V}\rightarrow \widetilde{\Omega}_{f_V}$ whose kernel is again isomorphic to $T^n_V$; indeed, by Lemma \ref{thm-coh-V}-(i), $T^{n-1}_V=0$, which implies that $\Omega^{n-1}_V\simeq \widetilde{\Omega}^{n-1}_V$ and also $df\wedge \Omega^{n-1}_V\simeq df\wedge \widetilde{\Omega}^{n-1}_V$.  From this we obtain a short exact sequence:
\[0\rightarrow T^n_V\rightarrow \Omega_{f_V}\rightarrow \widetilde{\Omega}_{f_V}\rightarrow 0,\] 
and the result follows from Lemma \ref{thm-coh-V}-(ii), according to which $\dim_{\mathbb{C}}T^n_V=\tau(V)$.
\end{proof}

\begin{proof}[Proof of Theorem \ref{MP}]
Combining formulas (\ref{br-n}) and (\ref{gr-le-for}) we obtain:
\[\mu_{BR}(f_V)=\mu(f_V)+\mu(V)-\tau(V).\]
Substituting this formula to the fundamental relation (\ref{fun-rel-2}) we obtained the desired formula (\ref{MP-id}). 
\end{proof}

\section*{Acknowledgements}
This research has been supported by the Research Foundation of S\~ao Paulo (FAPESP), grand No: 2017/23555-19.

\end{document}